\newenvironment{proof}{\noindent {\bf Proof:}}{$\Box$ \vspace{2 ex}}
\newcommand{\C}{\mathbb{C}}
\newcommand{\Z}{\mathbb{Z}}
\newcommand{\Q}{\mathbb{Q}}
\newcommand{\R}{\mathbb{R}}
\newcommand{\A}{\mathcal{A}}
\newcommand{\B}{\mathcal{B}}
\newcommand{\ra}{\rightarrow}
\newcommand\Aut{\operatorname{Aut}}
\newcommand\im{\operatorname{image}}
\newcommand\Stab{\operatorname{Stab}}
\newcommand\Gal{\operatorname{Gal}}
\newcommand\Disc{\operatorname{Disc}}
\newcommand\Norm{\operatorname{Norm}}
\newcommand\GL{\operatorname{GL}}
\newtheorem{proposition}{Proposition}[section]
\newtheorem{theorem}[proposition]{Theorem}
\newtheorem{corollary}[proposition]{Corollary}
\newtheorem{question}[proposition]{Question}
\newtheorem{lemma}[proposition]{Lemma}
\newenvironment{notation}{\vspace{2 ex}{\noindent{\bf Notation. }}}{\vspace{2 ex}}
\newenvironment{remark}{\vspace{2 ex}{\noindent{\bf Remark. }}}{\vspace{2 ex}}
\title{Mass formulas for local Galois representations to wreath products and cross products\footnote{The final version of this
paper will be published in \textit{Algebra and Number Theory} }\\
}
\author{Melanie Matchett Wood\thanks{email: melanie.wood@math.princeton.edu}\\
Princeton University, Department of Mathematics\\
Fine Hall, Washington Road\\
Princeton, NJ 08544}
\date{\today}
\begin{document}

\maketitle

\begin{abstract}
Bhargava proved a formula for counting, with certain weights, degree $n$ \'{e}tale extensions of a local field, or equivalently, local Galois representations to $S_n$. This formula is motivation for his conjectures about the density of discriminants of $S_n$-number fields. We prove there are analogous ``mass formulas'' that count local Galois representations to any group that can be formed from symmetric groups by wreath products and cross products, corresponding to counting towers and direct sums of \'{e}tale extensions. 
We obtain as a corollary that the above mentioned groups have rational character tables.
Our result implies that $D_4$ has a mass formula for certain weights, but we show that $D_4$ does not have a mass formula when the local Galois representations to $D_4$ are weighted in the same way as representations to $S_4$ are weighted in Bhargava's mass formula.
\end{abstract}

Key words: Local Fields, Mass Formula, Counting Field Extensions

AMS Subject Classifications: 11S15 (Primary), 11R45

\begin{section}{Introduction}

Bhargava \cite{Bhargava} proved the following mass formula for counting isomorphism classes of \'{e}tale extensions of degree $n$ of a local field $K$: 
\begin{equation}\label{E:BharMF}
\sum_{[L:K]=n\ \mathrm{\acute{e}tale}}\frac{1}{|\Aut(K)|}\cdot \frac{1}{\Norm(\Disc_K L)}=\sum_{k=0}^{n-1} p(k,n-k)q^{-k};
\end{equation}
here $q$ is the cardinality of the residue field of $K$, and $p(k,n-k)$ denotes the number of partitions of $k$ into at most $n-k$ parts.
Equation~\eqref{E:BharMF} is proven using Serre's beautiful mass formula \cite{Serre} which counts
 totally ramified degree $n$ extensions of a local field.  Equation~\eqref{E:BharMF} is at the heart of the conjecture
  \cite[Conjecture 1]{Bhargava} Bhargava makes 
for the asymptotics of the number of $S_n$-number fields with discriminant $\leq X$, and also the conjectures
\cite[Conjectures 2-3]{Bhargava} Bhargava makes for
the relative asymptotics of $S_n$-number fields with certain local behaviors specified.  These conjectures are theorems for $n\leq 5$
(see \cite{DH}, \cite{Bh1}, \cite{Bh2}).

Kedlaya \cite[Section 3]{Kedlaya} has translated Bhargava's formula into the language of Galois representations so that the sum in Equation~\eqref{E:BharMF} becomes a sum over Galois representations to $S_n$ as follows:
 \begin{equation}\label{E:BharMFrep}
\frac{1}{n!}\sum_{\rho : \Gal(K^{\mathrm{sep}}/K) \ra S_n} \frac{1}{q^{c(\rho)}}=\sum_{k=0}^{n-1} p(k,n-k)q^{-k};
\end{equation}
here $c(\rho)$ denotes the Artin conductor of $\rho$ composed with the standard representation $S_n\ra \GL_n(\C)$.  
  
What is remarkable about the mass formulas in Equations~\eqref{E:BharMF} and \eqref{E:BharMFrep} 
is that the right hand side only
depends on $q$ and, in fact, is a polynomial (independent of $q$) evaluated at $q^{-1}$.  
A priori, the left hand sides could depend
on the actual local field $K$, and even if they only depended on $q$, it is not clear there should be a uniform way to write them
as a polynomial function of $q^{-1}$.  This motivates the following definitions.
Given a local field $K$ and a finite group $\Gamma$, let $S_{K,\Gamma}$ denote the set of continuous homomorphisms $\Gal(K^{\mathrm{sep}}/K)\ra \Gamma$ (for the discrete topology on $\Gamma$) and let
$q_K$ denote the size of the residue field of $K$.
Given a function
$c \colon  S_{K,\Gamma} \ra \Z_{\geq 0}$, we define the \emph{total mass} of $(K,\Gamma,c)$
to be
$$
M(K,\Gamma,c):=\sum_{\rho\in S_{K,\Gamma}} \frac{1}{q_K^{c(\rho)}}.
$$
(If the sum diverges, we could say the mass is $\infty$ by convention.
In most interesting cases, e.g. see \cite[Remark 2.3]{Kedlaya},
and all cases we consider in this paper, the sum will be convergent.)
  Kedlaya gave a similar definition, but one should note that our
definition of mass differs from that in \cite{Kedlaya} by a factor of $|\Gamma|$.  
  In \cite{Kedlaya}, $c(\rho)$ is always taken
 to be the Artin conductor of the composition of $\rho$ and some $\Gamma \ra \GL_n(\C)$.  We
 refer to such $c$ as the \emph{counting function attached to the representation} $\Gamma \ra \GL_n(\C)$.  
In this paper, we consider more general $c$.
 
 Given a group $\Gamma$, 
 a \emph{counting function for $\Gamma$} is any function
$c \colon \bigcup_{K} S_{K,\Gamma} \ra \Z_{\geq 0}$ (where the union is over all isomorphism classes of local fields)
such that $c(\rho)=c(\gamma \rho \gamma^{-1})$ for every $\gamma \in \Gamma$. 
 (Since an isomorphism of local fields only determines an isomorphism of their
 absolute Galois groups up to conjugation, we need this condition in order for the counting functions to be
 sensible.)
Let $c$ be a counting function for $\Gamma$ and $S$ be a class of local fields.
We say that $(\Gamma,c)$ has a \emph{mass formula} for $S$ if there exists a polynomial $f(x)\in\Z[x]$ such that for all local fields $K\in S$ we have
$$
M(K,\Gamma,c)=f\left(\frac{1}{q_K}\right).
$$
 We also say that $\Gamma$ has a mass formula for $S$ if there is a $c$ such that $(\Gamma,c)$ has a mass formula for $S$.

  Kedlaya \cite[Theorem 8.5]{Kedlaya} proved that $(W(B_n),c_{B_n})$ has a mass formula for all local fields, where 
  $W(B_n)$ is the Weyl group of $B_n$ and $c_{B_n}$ is the counting function attached to
  the Weyl representation of $B_n$.  This is in analogy with Equation~\eqref{E:BharMFrep} which shows that 
  $(W(A_n),c_{A_n})$ has a mass formula for all local fields, where 
  $W(A_n)\cong S_n$ is the Weyl group of $A_n$ and $c_{A_n}$ is the counting function attached to
  the Weyl representation of $A_n$.
Kedlaya's analogy is very attractive, but he found that it does not extend to the Weyl groups of $D_4$ or $G_2$ when the counting
function is the one attached to the Weyl representation; he showed that mass formulas for all local fields do not exist for those groups and those
particular
counting functions.

The main result of this paper is the following.
\begin{theorem}\label{T:allMF}
Any permutation group that can be constructed from the symmetric groups $S_n$ using wreath products and cross products
has a mass formula for all local fields.
\end{theorem}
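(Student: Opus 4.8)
The plan is to induct on the way the group is built, with three ingredients: a base case for symmetric groups, easy multiplicativity for cross products, and a recursion for wreath products that carries the real content. For the base case, Equation~\eqref{E:BharMFrep} already gives a mass formula for $S_n$ (our $M$ differs from Kedlaya's by the factor $n!$, which does not affect being a polynomial in $1/q_K$); I take $c_{S_n}$ to be the counting function attached to the standard permutation representation, so that $c_{S_n}(\psi)=v_K(\Disc_K E_\psi)$ for $E_\psi$ the degree-$n$ étale algebra cut out by $\psi$. For cross products, a homomorphism to $A\times B$ is a pair $(\psi_A,\psi_B)\in S_{K,A}\times S_{K,B}$, and with $c_{A\times B}:=c_A+c_B$ (additivity of Artin conductors on direct sums) the mass factors as $M(K,A\times B,c_{A\times B})=M(K,A,c_A)\,M(K,B,c_B)$, a product of polynomials in $1/q_K$.

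For the wreath product $A\wr B=A^m\rtimes B$, with $B\le S_m$ and $A$ acting on a set $X$ with $|X|=n$, I would use the standard tower correspondence. Composing $\psi\in S_{K,A\wr B}$ with the projection to $B$ yields $\bar\psi\in S_{K,B}$, cutting out a degree-$m$ étale algebra $E=\prod_i E_i$ (one factor per orbit of the image of $\bar\psi$ on $[m]$); and lifting $\bar\psi$ to $A\wr B$ is the same, by Shapiro's lemma for the constant group $A$, as a choice of $\sigma_i\in S_{E_i,A}$ for each $i$. Thus $A\wr B$-representations are towers $F/E/K$ of étale algebras. The conductor–discriminant formula together with the tower formula $\Disc_K F=\Norm_{E/K}(\Disc_E F)\cdot(\Disc_K E)^{n}$ then gives the decomposition $c_{A\wr B}(\psi)=n\,c_B(\bar\psi)+\sum_i f_i\,c_A(\sigma_i)$, where $f_i$ is the residue degree of $E_i/K$; this $c_{A\wr B}$ is exactly the counting function of the degree-$nm$ permutation representation. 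Since $q_K^{-f_i c_A(\sigma_i)}=q_{E_i}^{-c_A(\sigma_i)}$, the fiber over each orbit contributes precisely the $A$-mass $M(E_i,A,c_A)$, so that $M(K,A\wr B,c_{A\wr B})=\sum_{\bar\psi\in S_{K,B}}q_K^{-n c_B(\bar\psi)}\prod_i M(E_i,A,c_A)$.

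Here the main obstacle appears: by induction $M(E_i,A,c_A)=h_A(1/q_{E_i})=h_A(q_K^{-f_i})$, so the summand depends on the orbit residue degrees $f_i$, which are not determined by $c_B(\bar\psi)$ alone, and the plain mass formula for $B$ does not suffice. To close the induction I would prove a stronger statement: for every group $\Gamma$ in our class, every polynomial $P\in\Z[x]$, and every positive integer $r$, the weighted mass $W_\Gamma(K;P,r):=\sum_{\psi\in S_{K,\Gamma}}q_K^{-r\,c_\Gamma(\psi)}\prod_{\mathcal O}P(q_K^{-f_{\mathcal O}})$, with the product over the orbits $\mathcal O$ of $\psi$ and $f_{\mathcal O}$ the residue degree of the corresponding orbit field, is a polynomial in $1/q_K$ uniformly in $K$; the theorem is the case $P=1,\ r=1$. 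This statement is closed under the wreath recursion: because residue degrees multiply in towers, an orbit of $\psi$ lying above the factor $E_i$ has $q_K^{-f_{\mathcal O}}=q_{E_i}^{-f(\mathcal O/E_i)}$, so the inner sum over each fiber is exactly $W_A(E_i;P,r)=h(q_K^{-f_i})$ for the polynomial $h$ provided by induction, and scaling the conductor by $n$ turns the outer sum into $W_B(K;h,rn)$, again of the allowed form. Cross products stay multiplicative, so everything reduces to the base case.

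The analytic heart is then the strengthened statement for $\Gamma=S_m$, which I would prove exactly as Bhargava derives \eqref{E:BharMF} from Serre's mass formula, now carrying the weight $P$ and the scaling $r$. Forming the exponential generating function $\sum_m\frac{x^m}{m!}W_{S_m}(K;P,r)$, multiplicativity of the weight over orbits identifies its logarithm with the sum over degree-$d$ field extensions $E/K$ of $q_K^{-r\,v_K(\Disc_K E)}P(q_K^{-f(E/K)})$. Writing each such $E$ as a totally ramified extension of the unramified $K_f$ of degree $f\mid d$ (so $q_{K_f}=q_K^{f}$) and applying Serre's mass formula over $K_f$ shows each connected contribution is a polynomial in $1/q_K$; extracting the coefficient of $x^m$ yields a polynomial for $W_{S_m}$. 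I expect the only delicate points to be the multiplicity conventions in the tower bijection and in the exponential formula, and checking that the conductor decomposition is exactly additive with the residue-degree weights $f_i$ — routine once the tower formula and Serre's formula are in hand.
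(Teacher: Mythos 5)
Your overall architecture---induct on the construction, handle cross products by multiplicativity, and for wreath products factor the mass over the tower $F/E/K$ while carrying a refinement that remembers the residue degrees of the orbit fields---matches the paper's, and your weighted mass $W_\Gamma(K;P,r)$ with a polynomial weight $P(q_K^{-f_{\mathcal O}})$ per orbit is a workable substitute for the paper's ``mass formulas by type.'' But there is a fatal problem with your choice of counting function, and it is exactly the point the paper is at pains to make. You take $c_{A\wr B}$ to be the discriminant exponent of the degree-$nm$ permutation representation, which decomposes as $n\,c_B(\bar\psi)+\sum_i f_i\,c_A(\sigma_i)$ with $n=|\A|$; this forces the scaling parameter in your induction to grow (you need $W_B(K;h,rn)$ with $rn\geq 2$), and the strengthened statement is simply false for $r\ge 2$. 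Serre's mass formula evaluates $\sum_L q^{-(d(L)-e+1)}$ over totally ramified degree-$e$ extensions, hence controls $\sum_L q^{-d(L)}$, but it says nothing about $\sum_L q^{-r\,d(L)}$ for $r\ge 2$ because the individual discriminant exponents $d(L)$ vary: already for quadratic \'{e}tale algebras over $\Q_2$ with $r=2$ one gets $\tfrac12\bigl(2+2\cdot 2^{-4}+4\cdot 2^{-6}\bigr)=1+\tfrac{3}{32}$, whereas the tame polynomial $1+q^{-2}$ predicts $\tfrac54$. This is not a technicality to be routed around: the paper computes $M(\Q_2,D_4,d)=\tfrac{121}{8}$ against the tame value $17$, so $S_2\wr S_2$ with your counting function has \emph{no} mass formula for all local fields, and no proof of your claim as stated can exist.

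Since the theorem only asserts the existence of \emph{some} counting function, the repair is to change yours: drop the factor $n$ and use $c(\rho)=c_B(\bar\rho)+\sum_{i} f_i\,c_A(\rho_i)$ as in Equation~\eqref{E:count}. Then $q_K^{-f_i c_A(\rho_i)}=q_{L_i}^{-c_A(\rho_i)}$ makes each fiber contribute the honest unscaled ($r=1$) $A$-mass over $L_i$, the outer sum becomes an unscaled $B$-mass weighted only by a polynomial in the quantities $q_K^{-f_i}$, and your induction closes with Bhargava's formula for $S_n$ (refined by type, or equivalently by your weight $P$) as the base case---at that point your argument and the paper's proof of Theorem~\ref{T:MFR} coincide in substance. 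Note that this $c$ is no longer the Artin conductor of the permutation representation of $A\wr B$; in field-theoretic terms it weights a tower $M/L/K$ by $|\Disc(L|K)\,N_{L|K}(\Disc(M|L))|^{-1}$ rather than by $|\Disc(M|K)|^{-1}$, and that change of exponent on $\Disc(L|K)$ is precisely what the existence of the mass formula hinges on.
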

Kedlaya's mass formula \cite[Theorem 8.5]{Kedlaya} for $W(B_n)\cong S_2 \wr S_n$   
was the inspiration for this result, and it is now a special case of Theorem~\ref{T:allMF}.

In \cite[Section 8.2]{Bhargava}, Bhargava asks whether his conjecture for $S_n$-extensions about the relative asymptotics of the number of
 global fields
with specified local behaviors holds for other Galois groups.
Ellenberg and Venkatesh \cite[Section 4.2]{Ellenberg} suggest that we can try to count extensions of global fields by quite general invariants of Galois representations.  
In \cite{Abelian}, it is shown that when counting by certain invariants of abelian global fields, 
such as conductor, Bhargava's question can be answered affirmatively.  It is also shown in \cite{Abelian} that when counting abelian global fields by discriminant, 
 the analogous conjectures fail in at least some
cases.
In light of the fact that Bhargava's conjectures for the asymptotics of the number of $S_n$-number fields arise from
his mass formula \eqref{E:BharMF} for counting by discriminant, one naturally looks for mass formulas that use other ways of counting,
such as Theorem~\ref{T:allMF},
which might inspire conjectures for the asymptotics of counting global fields with other Galois groups.

In Section~\ref{S:proof}, we prove that if groups $A$ and $B$ have certain refined mass formulas, 
then $A\wr B$ and $A\times B$ also have such refined mass formulas, which inductively proves Theorem~\ref{T:allMF}.  
Bhargava's  mass formula for $S_n$, given in Equation~\eqref{E:BharMFrep},
is our base case.  
In Section \ref{S:chartable}, as a corollary of our main theorem, we see 
that any group formed from symmetric groups by taking wreath and cross products has a rational character table.  This
result, at least in such simple form, is not easily found in the literature.
In order to suggest what our results say in the language of field extensions,
in Section~\ref{S:towers} we mention the relationship between Galois representations
to wreath products and towers of field extensions.

In Section~\ref{S:further}, we discuss some situations
in which groups have mass formulas for one way of counting but not another.  
In particular, we show that $D_4\cong S_2\wr S_2$ does not have a mass formula for all local fields
when $c(\rho)$ is the counting function attached to 
the standard representation of $S_4$ restricted to $D_4\subset S_4.$
Consider quartic extensions $M$ of $K$, whose Galois closure has group $D_4$, with
quadratic subfield $L$.
The counting function that gives the mass formula for $D_4$ of Theorem~\ref{T:allMF}
corresponds to counting such extensions $M$ weighted by
$$|\Disc (L|K) N_{L|K} (\Disc (M|L))|^{-1},$$
whereas the counting function attached to 
the standard representation of $S_4$ restricted to $D_4\subset S_4$
corresponds to counting such extensions $M$ weighted by
$$|\Disc(M|K)|^{-1}= |\Disc (L|K)^2 N_{L|K} (\Disc (M|L))|^{-1}.$$
So this change of exponent in the $\Disc (L|K)$ factor affects the existence of a mass formula for all local fields.

\end{section}

\begin{notation}
Throughout this paper, $K$ is a local field and $G_K:=\Gal(K^{\mathrm{sep}}/K)$ is
the absolute Galois group of $K$.  
All maps in this paper from $G_K$ or subgroups of $G_K$ are continuous 
homomorphisms, with the discrete topology on all finite groups. 
 We let
 $I_K$ denote the inertia subgroup of $G_K$.  Recall that $S_{K,\Gamma}$ is the set of maps $G_K\ra \Gamma$, and
 $q_K$ is the size of residue field of $K$. 
Also, $\Gamma$ will always be a permutation group acting on a finite set.
\end{notation}

\begin{section}{Proof of Theorem~\ref{T:allMF}}\label{S:proof}

In order to prove Theorem~\ref{T:allMF}, we prove finer mass formulas first.
Instead of summing over all representations of $G_K$, we stratify the representations by \emph{type} and prove mass formulas 
for the sum of representations of each type.    Let
$\rho \colon G_K \ra \Gamma$ be a representation such that the action of $G_K$ has $r$ orbits $m_1,\dots,m_r$.  
If, under restriction to the 
representation $\rho : I_K \ra \Gamma$, orbit $m_i$ breaks up into $f_i$ orbits of size $e_i$, then we say
that $\rho$ is of \emph{type} $(f_1^{e_1} f_2^{e_2} \cdots f_r^{e_r})$ (where the terms $f_i^{e_i}$ are unordered formal symbols, as in \cite[Section 2]{Bhargava}).  
Let $L_i$ be the fixed field of the stabilizer of an element in $m_i$.
So, $[L_i\colon K]=|m_i|$. 
Since $I_{L_i}=G_{L_i}\cap I_K$ is the stabilizer in $I_K$ of an element in $m_i$, we conclude that $e_i=[I_K:I_{L_i}]$,
which is the ramification index of $L_i /K$.  Thus, $f_i$ is the inertial degree of $L_i /K$.

Given $\Gamma$, a counting function $c$ for $\Gamma$, and a type
$\sigma=(f_1^{e_1} f_2^{e_2} \cdots f_r^{e_r})$, we define the \emph{total mass} of $(K,\Gamma,c, \sigma)$ to be
$$
M(K,\Gamma,c, \sigma):=\sum_{\substack{\rho\in S_{K,\Gamma}\\ \mathrm{type}\  \sigma}}
 \frac{1}{q_K^{c(\rho)}}.
$$
We say that $(\Gamma,c)$ has \emph{mass formulas for $S$ by type} if for every type $\sigma$
 there exists a polynomial $f_{(\Gamma, c,\sigma)}(x)\in\Z[x]$ such
that for all local fields $K\in S$ we have
$$
M(K,\Gamma,c,\sigma)=f_{(\Gamma,c,\sigma)}\left(\frac{1}{q_K}\right).
$$
Bhargava \cite[Proposition 1]{Bhargava} actually proved that $S_n$ has mass formulas for all local fields
by type.  Of course, if $(\Gamma,c)$ has mass formulas by type, then we can sum over
all types to obtain a mass formula for $(\Gamma,c)$.

The key step in the proof of Theorem~\ref{T:allMF} is the following.

\begin{theorem}\label{T:MFR}
If $A$ and $B$ are finite permutation groups, 
 $S$ is some class of local fields, and
 $(A,c_A)$ and $(B,c_B)$ have mass formulas for $S$ by type, then there exists a counting function $c$ (given in Equation~\eqref{E:count})  such that
$(A\wr B, c)$ has  mass formulas for $S$ by type.
\end{theorem}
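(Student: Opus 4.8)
The plan is to realize each homomorphism $\rho\colon G_K \ra A\wr B$ as a \emph{tower}: its projection $\bar\rho\colon G_K \ra B$ to the base group, together with, over each $G_K$-orbit of blocks, a homomorphism into $A$ from the absolute Galois group of the corresponding subfield. Writing $A\wr B = A^m\rtimes B$ with $B$ acting on $[m]$, I first fix $\bar\rho$ and record the $G_K$-orbits $O_1,\dots,O_s$ on $[m]$. Each orbit $O_k$ has point-stabilizer $H_k=\bar\rho^{-1}(\Stab_B(i_k))$, a finite-index subgroup of $G_K$ whose fixed field $L_k$ is a local field with $[L_k:K]=|O_k|=d_k$ and $H_k=G_{L_k}$. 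The central parametrization step is the bijection
\[
\{\rho\colon G_K \ra A\wr B \ :\ \rho \text{ projects to } \bar\rho\}\ \longleftrightarrow\ \prod_{k=1}^{s}\Hom(G_{L_k},A)=\prod_{k=1}^{s}S_{L_k,A},
\]
obtained by restricting $\rho$ to each $H_k$, composing with the projection of the block-stabilizer $A\wr\Stab_B(i_k)$ onto its $i_k$-th copy of $A$ to get $\psi_k\colon G_{L_k}\ra A$, and checking that these data are free and independent across the orbits. Summing over $\bar\rho$ then expresses $M(K,A\wr B,c,\sigma)$ as a sum, over base homomorphisms, of products of $A$-masses over the fields $L_k$.

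The counting function is forced by the conductor--discriminant formula for towers. I would set
\begin{equation}\label{E:count}
c(\rho):=c_B(\bar\rho)+\sum_{k=1}^{s}f_k\,c_A(\psi_k),
\end{equation}
where $f_k$ is the inertial degree of $L_k/K$. This factor is precisely what converts the $A$-mass computed over $L_k$ into a quantity over $K$: since $q_{L_k}=q_K^{f_k}$, one has $q_{L_k}^{-c_A(\psi_k)}=q_K^{-f_k c_A(\psi_k)}$, and hence $q_K^{-c(\rho)}=q_K^{-c_B(\bar\rho)}\prod_k q_{L_k}^{-c_A(\psi_k)}$. One checks that $c$ is conjugation-invariant: conjugating $\rho$ merely conjugates $\bar\rho$ within $B$, permutes the orbits among themselves, and conjugates each $\psi_k$ within $A$, all of which leave \eqref{E:count} unchanged because $c_A$ and $c_B$ are counting functions. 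Thus $c$ is a genuine counting function for $A\wr B$.

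I would next track how types behave in the tower. Over $L_k$, a $G_{L_k}$-orbit on $[n]$ in the type $\tau_k$ of $\psi_k$ gives a field $M$ with $M/L_k$ of ramification index $e$ and inertial degree $f$; since ramification indices and inertial degrees are multiplicative in $M/L_k/K$, and $L_k/K$ has invariants $(e_k,f_k)$ recorded by the base type $\tau_0$ of $\bar\rho$, this orbit contributes the symbol $(ff_k)^{ee_k}$ to the type $\sigma$ of $\rho$ over $K$. So $\sigma$ is determined by $\tau_0$ together with the fiber types $\tau_k$, and a fixed $\sigma$ arises from only finitely many compatible tuples $(\tau_0,(\tau_k)_k)$. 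For each such tuple the contribution to $M(K,A\wr B,c,\sigma)$ is
\[
\sum_{\bar\rho \text{ of type } \tau_0}\frac{1}{q_K^{c_B(\bar\rho)}}\prod_{k=1}^{s}M(L_k,A,c_A,\tau_k).
\]
The key observation is that every $\bar\rho$ of type $\tau_0$ yields fields $L_k$ with the \emph{same} residue-field sizes $q_{L_k}=q_K^{f_k}$, so each $M(L_k,A,c_A,\tau_k)=f_{(A,c_A,\tau_k)}(q_K^{-f_k})$ is one fixed polynomial in $q_K^{-1}$ and factors out of the $\bar\rho$-sum, leaving $\bigl(\prod_k f_{(A,c_A,\tau_k)}(q_K^{-f_k})\bigr)\,M(K,B,c_B,\tau_0)$.

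By the hypothesis that $(B,c_B)$ has mass formulas by type, $M(K,B,c_B,\tau_0)=f_{(B,c_B,\tau_0)}(q_K^{-1})$, so every contribution is a polynomial in $q_K^{-1}$, and $M(K,A\wr B,c,\sigma)$, a finite sum of such contributions, is a polynomial in $q_K^{-1}$ independent of $K$. I expect the main obstacle to be the first step: making the tower parametrization precise as a bijection of honest homomorphisms rather than conjugacy classes, correctly identifying each point-stabilizer $H_k$ with the Galois group $G_{L_k}$, and verifying that the restriction-and-project map is a bijection onto $\prod_k\Hom(G_{L_k},A)$. Once this and the multiplicativity of $(e,f)$ in towers are in place, the polynomiality is a formal consequence of the factorization above.
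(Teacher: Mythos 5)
Your overall strategy is the same as the paper's: split $\rho$ into $\bar\rho\colon G_K\ra B$ plus the maps $\psi_k\colon G_{L_k}\ra A$ over the orbit representatives, define $c(\rho)=c_B(\bar\rho)+\sum_k f_k c_A(\psi_k)$, track types multiplicatively, and factor the mass. But there is a genuine error in your central parametrization step: the map $\rho\mapsto(\bar\rho,\psi_1,\dots,\psi_s)$ is \emph{not} a bijection onto $\{\bar\rho\}\times\prod_k\Hom(G_{L_k},A)$. It is surjective, but each fiber has exactly $|A|^{|\B|-s}$ elements, where $|\B|$ is the degree of the permutation representation of $B$ and $s$ is the number of $\bar\rho(G_K)$-orbits. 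The reason is that the data $(\bar\rho,(\psi_k))$ only record the action of the point-stabilizers on the representative blocks; to reconstruct all of $\rho$ one must additionally specify, for every non-representative block $j$, the element $\alpha_j\in A$ by which a chosen $\sigma_j\in G_K$ (with $\bar\rho(\sigma_j)$ carrying the representative block to $j$) acts on the representative copy of $\A$ before moving it, and these $|\B|-s$ choices are free and yield pairwise distinct homomorphisms (they are read off from $\rho(\sigma_j)$, so distinct choices give distinct $\rho$). Already for $A=B=S_2$, $A\wr B\cong D_4$, and $\bar\rho$ surjective, each pair $(\bar\rho,\psi_1)$ lifts to exactly two homomorphisms $\rho\colon G_K\ra D_4$, not one. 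This is precisely the content of the paper's Lemma~2.2, which proves the correspondence is $|A|^{|\B|-s}$-to-one by writing down the explicit reconstruction formula.

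The damage is contained but real: your displayed contribution
$\sum_{\bar\rho}q_K^{-c_B(\bar\rho)}\prod_k M(L_k,A,c_A,\tau_k)$
undercounts the true mass by the factor $|A|^{|\B|-s}$, so the polynomial you would output is wrong. Since $s$ is determined by the base type $\tau_0$, this factor is constant within each wreath type, so the qualitative conclusion (polynomiality, hence the existence of mass formulas by type) survives once the factor is inserted; but the step ``checking that these data are free and independent across the orbits'' cannot be completed as a bijectivity verification --- it must be replaced by the fiber count. Everything else in your argument (the conjugation-invariance of $c$, the multiplicativity of $(e,f)$ in towers and the resulting relation between the type of $\rho$ and the pair $(\tau_0,(\tau_k))$, the identity $q_{L_k}^{-c_A(\psi_k)}=q_K^{-f_kc_A(\psi_k)}$, and the final factorization) matches the paper's proof.
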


\begin{proof}
 Let $K$ be a local field in $S$.
Let $A$ act on the left on the set $\A$ and $B$ act on the left on the set $\B$.
We take the natural permutation action of $A \wr B$ acting
on a disjoint union of copies of $\A$ indexed by elements of $\B$.
Fix an ordering on $\B$ so that we have canonical orbit representatives in $\B$.
Given $\rho : G_K \ra A\wr B$, there is a natural quotient $\bar{\rho}: G_K \ra B$.  
Throughout this proof, we use $j$ as an indexing variable for the set $\B$
and $i$ as an indexing variable for the $r$ canonical orbit representatives in $\B$ of the $\rho(G_K)$ action.
Let $i_j$ be the index of the orbit representative of $j$'s orbit. 
Let $S_j\subset G_K$ be the stabilizer of $j$, and let $S_j$ have fixed field $L_j$.
We define $\rho_j \colon G_{L_j} \ra A$ to be the given action of $G_{L_j}$ on the $j$th copy of $\A$.  
We say that $\rho$ has \emph{wreath type}
\begin{equation}\label{E:wreathtype}
\Sigma =(f_1^{e_1}(\sigma_1)\cdots f_r^{e_r}(\sigma_r))
\end{equation}
if $\bar{\rho}$ has type $\sigma=(f_1^{e_1}\cdots f_r^{e_r})$ (where $f_i^{e_i}$ corresponds to the orbit of $i$) and
$\rho_i$ has type $\sigma_i$.  Note that type is a function of wreath type; if $\rho$ has wreath type $\Sigma$ as above where $\sigma_i=(f_{i,1}^{e_{i,1}}\cdots f_{i,r_i}^{e_{i,r_i}})$, then $\rho$ has type
$((f_i f_{i,k})^{e_ie_{i,k}})_{1\leq i\leq r,\, 1\leq k \leq r_i}$.

We consider the function $c$ defined as follows:
\begin{equation}\label{E:cdef}
c(\rho)=c_B(\bar{\rho})+\sum_{j\in \B} \frac{c_A(\rho_j)}{|\{\bar{\rho}(I_K) j\}|}.
\end{equation}
Since $c_B(\bar{\rho})$ only depends on the $B$-conjugacy class of $\bar{\rho}$
and $c_A(\rho_j)$ depends only on the $A$-conjugacy class of $\rho_j$,
we see that conjugation by elements of $A\wr B$ does not affect the right hand side of Equation~\eqref{E:count}
except by reordering the terms in the sum.  Thus $c$ is a counting function.

Since $\rho_j$ and $\rho_{i_j}$ 
are representations of conjugate subfields of $G_K$ and since $c_A$ is invariant under $A$-conjugation, $c_A(\rho_j)=
c_A(\rho_{i_j})$.  
There are $f_ie_i$ elements in the orbit of $i$ under $\bar{\rho}(G_K)$ 
and $e_{i_j}$ elements in the orbit of $j$ under $\bar{\rho}(I_K)$,
so $c(\rho)
= c_B(\bar{\rho})+\sum_{i=1}^r \frac{f_ie_i}{e_i} c_A(\rho_i)$ and thus
\begin{equation}\label{E:count}
c(\rho)= c_B(\bar{\rho})+\sum_{i=1}^r f_i c_A(\rho_i).
\end{equation}
Using this expression for $c(\rho)$, we will prove that $(A \wr B,c)$ has mass formulas by wreath type.  Then, summing over wreath types
that give the same type, we will prove that $(A \wr B,c)$ has mass formulas by type.

\begin{remark}
 For a permutation group $\Gamma$, let $d_\Gamma$ be the counting function attached to the permutation representation of $\Gamma$
(which is the discriminant exponent of the associated \'{e}tale extension).  Then we can compute
\begin{equation*}
 d_{A \wr B} = |\A| d_B(\bar{\rho})+\sum_{i=1}^r f_i d_A(\rho_i),
\end{equation*}
which is similar to the expression given in Equation~\eqref{E:count} but differs by the presence of $|\A|$ in the first term.
In particular, when we have mass formulas for $(A,d_A)$ and $(B,d_B)$, the mass formula for $A \wr B$ that we find in this paper is not with the counting function $d_{A \wr B}$.  We will see in Section~\ref{S:further}, when $A$ and $B$ are both $S_2$, that
$S_2 \wr S_2 \cong D_4$ does not have a mass formula with $d_{A \wr B}$.
\end{remark}

\begin{lemma}\label{L:manytoone}
The correspondence $\rho \mapsto (\bar{\rho}, \rho_1, \dots ,\rho_r)$ described above  
gives a function $\Psi$ from $S_{K,A\wr B}$ to tuples $(\phi,\phi_1,\dots,\phi_r)$ where
$\phi : G_K \ra B$, the groups $S_i$ are the stabilizers of canonical orbit representatives of the action of $\phi$ on $B$,
and $\phi_i \colon S_i \ra A$.
The map $\Psi$ is $(|A|^{|\B|-r})$-to-one and surjective.
\end{lemma}
\begin{proof}
Lemma~\ref{L:manytoone} holds when $G_K$ is replaced by any group.
It suffices to prove the lemma when $\bar{\rho}$ and $\phi$ are transitive because the general statement follows
by multiplication.  Let $b\in \B$ be the canonical orbit representative.  
Given a $\phi \colon G_K \ra B$ (or a 
 $\bar{\rho} \colon G_K \ra B$), for all $j\in\B$
choose a $\sigma_j \in G_K$ such that $\phi(\sigma_j)$ takes $b$ to $j$.
Given a $\rho \colon G_K \ra A \wr B$,
let $\alpha_j$ be the element of $A$ such that $\rho(\sigma_j)$ 
acts on the $b$th copy of $\A$ by $\alpha_j$ and then moves the $b$th copy of $\A$ to the $j$th copy.
Then for $g\in G_K$, the map $\rho$ is given by
\begin{equation}\label{E:rhodef}
\rho(g)=\bar{\rho}(g)(a_j)_{j\in\B}\in BA^{|\B|}=A \wr B, \quad \mbox{where  }  a_j=\alpha_{\bar{\rho}(g)(j)} \rho_1(\sigma_{\bar{\rho}(g)(j)}^{-1}g  \sigma_j) \alpha_j^{-1},
\end{equation} 
and $a_j\in A$ acts on the $j$th copy of $\A$.
For any transitive maps
$\phi \colon G_K \ra B$ and $\phi_b :\ S_b \ra A$ and  for any
choices of $\alpha_j\in A$ for all $j\in \B$ such that $\alpha_b=\phi_b(\sigma_b)$, we can check
that Equation~\eqref{E:rhodef} 
for $\bar{\rho}=\phi$ and $\rho_1=\phi_b$
gives a homomorphism $\rho \colon G_K \ra A \wr B$
with $(\bar{\rho},\rho_1)=(\phi,\phi_b)$, which proves the lemma.
\end{proof}

If $\Sigma$ is as in Equation~\eqref{E:wreathtype}, then
\begin{equation}\label{E:fullsum}
\sum_{\substack{\rho : G_K\ra A\wr B\\ \mathrm{wreath\,\, type}\ \Sigma}} \frac{1}{q_K^{c(\rho)}}=
|A|^{|\B|-r}\sum_{\substack{\phi : G_K\ra B\\ \mathrm{type}\ \sigma}}\sum_{\substack{\phi_1 :S_1 \ra A \\ \mathrm{type}\ \sigma_1}} 
\sum_{\substack{\phi_2 :S_2 \ra A \\ \mathrm{type}\ \sigma_2}} \cdots \sum_{\substack{\phi_r: S_r \ra A \\ \mathrm{type}\ \sigma_r}}
\frac{1}{q_K^{c_B(\phi)+\sum_{i=1}^r f_ic_A(\phi_i)}},
\end{equation}
where $S_i$ is the stabilizer under $\phi$ of a canonical orbit representative of
the action of $\phi$ on $\B$.
The right hand side of Equation~\eqref{E:fullsum} factors, and $S_i\subset G_K$ has fixed field $L_i$ with
 residue field of size $q_K^{f_i}$.
We conclude that
\begin{eqnarray*}
\sum_{\substack{\rho : G_K\ra A\wr B\\ \mathrm{wreath\,\, type}\ \Sigma}} \frac{1}{q_K^{c(\rho)}}&=&
|A|^{|\B|-r}\sum_{\substack{\phi : G_K\ra B\\ \mathrm{type}\ \sigma}} \frac{1}{q_K^{c_A(\phi)}}
\sum_{\substack{\phi_1 :G_{L_1} \ra A \\ \mathrm{type}\ \sigma_1}}  \frac{1}{q_K^{f_1{c_B(\phi_1)}}}
\cdots \sum_{\substack{\phi_r: G_{L_r} \ra A \\ \mathrm{type}\ \sigma_r}}
\frac{1}{q_K^{f_r{c_B(\phi_r)}}}\\
&=& |A|^{|\B|-r} f_{(B,c_B,\sigma)}\left(\frac{1}{q_K}\right)\prod_{i=1}^r f_{(A,c_A,\sigma_i)} \left(\frac{1}{q_K^{f_i}}\right).
\end{eqnarray*}
So, $(A\wr B,c)$ has mass formulas by wreath type, and thus by type.
\end{proof}

Kedlaya \cite[Lemma 2.6]{Kedlaya} noted that if $(\Gamma,c)$ and $(\Gamma',c')$ have mass formulas $f$ and $f'$, then $(\Gamma\times\Gamma',c'')$ has mass formula $ff'$, where $c''(\rho \times \rho')=c(\rho)+c'(\rho')$.
We can strengthen this statement to mass formulas by type using a much easier version of our argument for wreath products.
We define the \emph{product type} of a representation $\rho \times \rho' : G_K \ra \Gamma\times\Gamma'$
to be $(\sigma,\sigma')$, where $\sigma$ and $\sigma'$ are the types of $\rho$ and $\rho'$ respectively.  Then
$$
\sum_{\substack{\rho \times \rho' : G_K\ra \Gamma\times\Gamma'\\ \mathrm{product}\,\mathrm{ type}\ (\sigma,\sigma')}} \frac{1}{q_K^{c''(\rho \times \rho')}}=
\sum_{\substack{\phi : G_K\ra \Gamma\\ \mathrm{type}\ \sigma}} \frac{1}{q_K^{c(\rho)}}
\sum_{\substack{\phi_1 :G_{L_1} \ra \Gamma' \\ \mathrm{type}\ \sigma'}}  \frac{1}{q_K^{c'(\rho')}}.
$$
If $\Gamma$ and $\Gamma'$ have mass formulas by type, then the above gives
mass formulas of $\Gamma\times\Gamma'$ by product type.
Since type is a function of product type, we can sum the mass formulas
 by product type to obtain mass formulas by type
for $\Gamma\times\Gamma'$.
This, combined with Theorem~\ref{T:MFR} and
Bhargava's mass formula for $S_n$ by type \cite[Proposition 1]{Bhargava}, proves Theorem~\ref{T:allMF}.

\section{Groups with rational character tables}\label{S:chartable}
Kedlaya \cite[Proposition 5.3, Corollary 5.4, Corollary 5.5]{Kedlaya} showed that if $c(\rho)$ is the counting function attached to $\Gamma
\ra \GL_n(\C)$, then the following statement holds:
$(\Gamma,c)$ has a mass formula for all local fields $K$ with $q_K$ relatively prime to $|\Gamma|$ if and only if the character table of
$\Gamma$ has all rational entries.  
Kedlaya's proofs of \cite[Proposition 5.3, Corollary 5.4, Corollary 5.5]{Kedlaya}  hold for any counting function
$c$ that is determined by $\rho(I_K)$.
This suggests that we define a \emph{proper} counting function to be a counting function $c$ that satisfies the following: if we have $\rho:G_K\ra \Gamma$ and $\rho':G_{K'} \ra \Gamma$ with $q_{K},q_{K'}$ relatively prime to $|\Gamma|$, and if $\rho(I_{K})=\rho'(I_{K'})$, then 
$c(\rho)=c(\rho')$.

For proper counting functions, we always have partial mass formulas
proven as in \cite[Corollary 5.4]{Kedlaya}.
 \begin{proposition}
  Let $a$ be an invertible residue class mod $|\Gamma|$ and $c$ be a proper counting function.
Then $(\Gamma,c)$ has a mass formula for all local fields $K$ with $q_K\in a$.
 \end{proposition}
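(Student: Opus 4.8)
The plan is to reduce the claim to a fixed congruence class modulo $|\Gamma|$ and then exploit the fact that, for residue characteristic prime to $|\Gamma|$, the image $\rho(I_K)$ of a representation is severely constrained and governed entirely by tame ramification data. First I would fix an invertible residue class $a \bmod |\Gamma|$ and restrict attention to local fields $K$ with $q_K \in a$. For such $K$, the prime $p$ dividing $q_K$ does not divide $|\Gamma|$, so any $\rho : G_K \to \Gamma$ kills the wild inertia subgroup; the image $\rho(I_K)$ is therefore cyclic, generated by the image of a tame generator of $I_K$. Because $c$ is proper, $c(\rho)$ depends only on $\rho(I_K)$, and hence only on the conjugacy class of a generator of this cyclic image.

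Next I would count, for each possible image $\rho(I_K)$ (equivalently, for each conjugacy class of a generator), how many homomorphisms $\rho : G_K \to \Gamma$ realize it. This is where the tame structure of $G_K$ enters: the tame quotient of $G_K$ has the well-known presentation on a Frobenius $\phi$ and a tame generator $\tau$ with the single relation $\phi \tau \phi^{-1} = \tau^{q_K}$, so the number of $\rho$ with a prescribed image of $\tau$ is determined by solving this relation inside $\Gamma$. The key point is that the count of lifts depends on $\Gamma$, on the chosen conjugacy class, and on $q_K$ only through its residue class $a \bmod |\Gamma|$ (since $\tau^{q_K}$ only sees $q_K$ modulo the order of $\tau$, which divides $|\Gamma|$). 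Grouping the sum $M(K,\Gamma,c)$ over $\rho$ according to the value of $\rho(I_K)$, each group contributes $(\text{multiplicity}) \cdot q_K^{-c(\rho)}$, where both the multiplicity and the exponent $c(\rho)$ are constant as $K$ ranges over fields with $q_K \in a$.

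Therefore the total mass becomes a finite sum $\sum_{\mathcal{C}} N_{\mathcal{C}} \, q_K^{-c_{\mathcal{C}}}$, indexed by the finitely many possible tame images $\mathcal{C}$, where each $N_{\mathcal{C}} \in \Z_{\geq 0}$ and each exponent $c_{\mathcal{C}} \in \Z_{\geq 0}$ is independent of $K$ within the class $a$. This is visibly a polynomial in $q_K^{-1}$ with integer coefficients, which is exactly the mass formula asserted. I would then remark that this is precisely the content of Kedlaya's argument in \cite[Corollary 5.4]{Kedlaya}, now applied with the hypothesis ``$c$ determined by $\rho(I_K)$'' replaced by ``$c$ proper,'' which is the same condition restricted to the fields under consideration.

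The main obstacle is verifying cleanly that the lift-count $N_{\mathcal{C}}$ genuinely depends on $q_K$ only through $a \bmod |\Gamma|$ and not on finer arithmetic of $K$. This rests on two facts that must be stated carefully: that tameness forces $\rho(I_K)$ cyclic (so the relevant data is a single conjugacy class), and that the Frobenius relation $\tau \mapsto \tau^{q_K}$ only constrains things modulo the order of the tame generator's image. Once these are in hand the polynomiality is immediate, so the real work is organizational—packaging the orbit/conjugacy bookkeeping so that the exponents $c_{\mathcal{C}}$ and multiplicities $N_{\mathcal{C}}$ are manifestly $K$-independent on the class $a$. Since the proposition explicitly invokes that Kedlaya's proof carries over verbatim under the ``proper'' hypothesis, I expect the cleanest writeup to simply cite that argument after confirming the proper condition supplies exactly the input it needs.
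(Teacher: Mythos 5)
Your proposal is correct and follows essentially the same route as the paper, which proves this proposition simply by invoking Kedlaya's argument for \cite[Corollary 5.4]{Kedlaya} with the observation that it only uses the fact that $c$ is determined by $\rho(I_K)$ --- exactly the reduction you make. Your writeup just unpacks the internals of that argument (tame quotient presentation, cyclic inertia image, dependence of the lift count on $q_K$ only through its class mod $|\Gamma|$), all of which is accurate.
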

The following proposition says exactly when these partial mass formulas agree, again proven as in \cite[Corollary 5.5]{Kedlaya}.
\begin{proposition}\label{P:RatChar}
Let $c$ be a proper counting function for $\Gamma$.  Then $(\Gamma,c)$ has a mass formula for all local fields 
$K$ with $q_K$ relatively prime to $|\Gamma|$ if and only if
$\Gamma$ has a rational character table.
\end{proposition}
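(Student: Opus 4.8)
The plan is to build on the preceding Proposition, which already supplies, for each invertible residue class $a$ modulo $|\Gamma|$, a polynomial $f_a$ with $M(K,\Gamma,c)=f_a(1/q_K)$ whenever $q_K\equiv a$. A single mass formula valid for all $K$ with $q_K$ prime to $|\Gamma|$ exists precisely when all the $f_a$ coincide: by Dirichlet's theorem each admissible class $a$ contains infinitely many primes $p$, hence infinitely many fields (e.g.\ $\Q_p$) with $q_K\equiv a$, so a polynomial agreeing with $f_a$ on all of them must equal $f_a$. Thus I would reduce the statement to showing that the equality $f_a=f_{a'}$ for all invertible $a,a'$ is equivalent to rationality of the character table.

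First I would make the polynomials $f_a$ explicit. When $q_K$ is prime to $|\Gamma|$, wild inertia (pro-$p$) maps trivially, so every $\rho\colon G_K\ra\Gamma$ factors through the tame quotient, topologically generated by a Frobenius $\sigma$ and a tame-inertia generator $\tau$ with $\sigma\tau\sigma^{-1}=\tau^{q_K}$. Hence $\rho$ corresponds to a pair $(s,t)=(\rho(\sigma),\rho(\tau))\in\Gamma^2$ satisfying $sts^{-1}=t^{q_K}$, with $\rho(I_K)=\langle t\rangle$. For a fixed $t$ the admissible $s$ form a coset of $C_\Gamma(t)$, so their number is $|C_\Gamma(t)|$ if $t$ is conjugate to $t^{q_K}$ and $0$ otherwise. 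Because $c$ is proper, $c(\rho)$ depends only on the subgroup $\rho(I_K)=\langle t\rangle$, and by conjugation-invariance it descends to a well-defined value $c(\mathcal C)$ attached to each conjugacy class $\mathcal C$ (elements of one class generate conjugate cyclic subgroups). Grouping the defining sum by class and using $|\mathcal C|\,|C_\Gamma(t)|=|\Gamma|$, I would obtain
\[
M(K,\Gamma,c)=|\Gamma|\sum_{\mathcal C\,:\,\mathcal C^{q_K}=\mathcal C}q_K^{-c(\mathcal C)},
\]
where $\mathcal C^{q_K}=\mathcal C$ abbreviates ``$t^{q_K}$ is conjugate to $t$ for $t\in\mathcal C$''. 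This condition depends only on $q_K$ modulo $|\Gamma|$, so it identifies $f_a(x)=|\Gamma|\sum_{\mathcal C:\,\mathcal C^{a}=\mathcal C}x^{c(\mathcal C)}$.

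The comparison step is then essentially formal. For $a=1$ every class is rational, so $f_1(x)=|\Gamma|\sum_{\mathcal C}x^{c(\mathcal C)}$ ranges over \emph{all} classes, while $f_a$ ranges only over the $a$-rational subset. Since all coefficients are non-negative, $f_1-f_a=|\Gamma|\sum_{\mathcal C:\,\mathcal C^a\neq\mathcal C}x^{c(\mathcal C)}$ has non-negative coefficients, so $f_a=f_1$ if and only if every class is $a$-rational. Therefore all $f_a$ agree if and only if $\mathcal C^a=\mathcal C$ for every class $\mathcal C$ and every invertible $a$ modulo $|\Gamma|$. I would close by invoking the standard characterization of rational character tables: $\Gamma$ has a rational character table exactly when every $g$ is conjugate to $g^a$ for all $a$ coprime to the order of $g$, which is precisely the condition just obtained.

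The only genuine content, and the step I expect to require the most care, is the reduction to the tame pair $(s,t)$ together with the bookkeeping that converts the sum over homomorphisms into the class sum above; the final comparison is a pure non-negativity argument that uses no hypothesis on $c$ beyond properness. This is the method of Kedlaya \cite[Corollary 5.5]{Kedlaya}, the single new point being that the factorization of $c$ through $\rho(I_K)$ into a class function relies only on properness, not on $c$ being the conductor attached to a linear representation.
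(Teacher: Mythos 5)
Your proposal is correct and takes essentially the same route as the paper, which proves this proposition by invoking Kedlaya's argument (his Corollary 5.5) after observing that it uses only the fact that $c$ is determined by $\rho(I_K)$ --- i.e.\ properness; you have simply written out that tame-quotient, pair-$(s,t)$, conjugacy-class computation in full. The reduction to comparing the polynomials $f_a$ with $f_1$ via non-negativity of coefficients, and the identification of ``all classes $a$-rational for all invertible $a$'' with rationality of the character table, is exactly the intended argument.
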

So, when looking for a group and a proper counting function with mass formulas for all local fields, we should look among groups with rational character tables (which are relatively rare; for example, including only 14 of the 93 groups of order $<32$ \cite{Conway}).
All specific counting functions that have been so far considered in the literature are proper.  It is not 
clear if there are any interesting non-proper counting functions.

Our proof of Theorem~\ref{T:MFR} has the following corollary.
\begin{corollary}\label{C:RatChar}
Any permutation group that can be constructed from the symmetric groups using wreath products and cross products
has a rational character table.
\end{corollary}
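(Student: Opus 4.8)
The plan is to deduce the rationality of the character table from the mass-formula results established earlier, using Proposition~\ref{P:RatChar} as the bridge. The key observation is that the permutation representation of a permutation group $\Gamma$ always gives rise to a proper counting function: if $c$ is the counting function attached to $\Gamma \ra \GL_n(\C)$ via the permutation action, then $c(\rho)$ is the Artin conductor, which depends only on $\rho(I_K)$ (together with how the inertia filtration acts, but when $q_K$ is coprime to $|\Gamma|$ the wild inertia acts trivially and tame inertia is cyclic, so the conductor is determined by $\rho(I_K)$). Thus this $c$ is proper, and Proposition~\ref{P:RatChar} tells us that $(\Gamma, c)$ has a mass formula for all local fields $K$ with $q_K$ relatively prime to $|\Gamma|$ if and only if $\Gamma$ has a rational character table.

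First I would verify that the counting function $c$ produced in the proof of Theorem~\ref{T:MFR}, namely the one in Equation~\eqref{E:count}, is itself proper, or at least that it agrees with a proper counting function in the relevant range. Indeed, $c_B(\bar\rho)$ and each $c_A(\rho_i)$ are built from permutation-representation conductors of the constituent groups, and the formula expresses $c$ as a $\Z_{\geq 0}$-combination of these; since conductors of the pieces are determined by the images of inertia, the combined $c$ is determined by $\rho(I_K)$ as well. Granting that the base case $S_n$ with its standard counting function $c_{A_n}$ is proper (its conductor is the discriminant exponent, manifestly a function of inertia), an easy induction using Equation~\eqref{E:count} and the product formula $c''(\rho\times\rho')=c(\rho)+c'(\rho')$ shows that every group built from symmetric groups by wreath and cross products carries a proper counting function $c$ with a genuine mass formula for \emph{all} local fields.

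Next I would combine these two facts. By Theorem~\ref{T:allMF} (or more precisely by the construction in Theorem~\ref{T:MFR} together with the cross-product argument and Bhargava's base case), any group $\Gamma$ constructed from symmetric groups by wreath and cross products has a mass formula for all local fields, with a counting function $c$ that is proper. In particular, $(\Gamma, c)$ has a mass formula for the subclass of local fields with $q_K$ coprime to $|\Gamma|$. Applying Proposition~\ref{P:RatChar} to this proper $c$, the existence of such a mass formula forces $\Gamma$ to have a rational character table, which is exactly the assertion of Corollary~\ref{C:RatChar}.

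The main obstacle I anticipate is the verification that the inductively constructed counting function $c$ of Equation~\eqref{E:count} is proper in the sense defined above; this is the one place where something must be checked rather than merely cited. The subtlety is that properness is a condition comparing representations of \emph{different} local fields $K$ and $K'$ with a common inertia image, whereas the recursive formula for $c$ is phrased in terms of the orbit data $(f_i, e_i)$ and the pieces $\rho_i$ living over subfields $L_i$. One must confirm that when $q_K, q_{K'}$ are coprime to $|\Gamma|$ and the inertia images agree, the induced inertia images for the subrepresentations $\bar\rho$ and $\rho_i$ also agree after matching the orbit structure, so that properness propagates up the wreath- and cross-product construction. Once this bookkeeping is in place, the corollary follows immediately from Proposition~\ref{P:RatChar}.
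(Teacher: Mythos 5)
Your proposal follows the paper's own argument: the paper proves the corollary in exactly this way, by checking that the counting function of Equation~\eqref{E:cdef} is proper whenever $c_A$ and $c_B$ are (the bookkeeping you flag is handled there via $I_{L_j}=I_K\cap S_j$, so that $\rho_j(I_{L_j})=\rho(I_K)\cap\Stab(j)$ is determined by $\rho(I_K)$), noting that the cross-product counting function and Bhargava's Artin-conductor base case for $S_n$ are proper, and then invoking Proposition~\ref{P:RatChar}. The approach and all key steps match the paper's proof.
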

\begin{proof}
We first show that the counting function $c$ defined in Equation~\eqref{E:cdef} is proper if
$c_A$ and $c_B$ are proper.  We consider only fields $K$ with $q_K$ relatively prime to $|\Gamma|$.
Since $c_B(\bar{\rho})$ only depends on
$\bar{\rho}(I_K)$, it is clear that the $c_B(\bar{\rho})$ term only depends on $\rho(I_K)$.

Since $I_{L_j}=I_K \cap S_j$, we have $\rho_j(I_{L_j})=\rho(I_{L_j})=\rho(I_K)\cap\Stab(j)$.
  Since $c_A(\rho_j)$ 
depends only on $\rho_j(I_{L_j})$, we see that it depends only on $\rho(I_K)$.
The sum in Equation~\eqref{E:cdef}
then depends only on $\rho(I_K)$.
  So the $c$ defined in Equation~\eqref{E:cdef} is proper.
  Clearly the $c''(\rho\times\rho')$ defined for cross products is proper if $c$ and $c'$ are proper.
   The counting function in Bhargava's mass formula for $S_n$ (Equation~\eqref{E:BharMFrep})
       is an Artin conductor
  and thus is proper.  So we can prove Theorem~\ref{T:allMF} with a proper counting function and conclude the corollary.
  
 One can show in a similar way that even in wild characteristics, the 
  counting function $c$ defined in Equation~\eqref{E:count} depends only on the images  of
  the higher ramification groups $G^m_K$; i.e., if $\rho :  G_K \ra A \wr B$ and $\rho' :  G_K' \ra A \wr B$
  have $\rho(G^m_K)=\rho'(G^m_{K'})$ for all $m\in [0,\infty)$, then $c(\rho)=c(\rho'),$
  as long as the same is true for $c_A$ and $c_B$.
\end{proof}

So, for example, $((S_7\wr S_4)
 \times S_3) \wr S_8$ has a rational character table.
   Corollary~\ref{C:RatChar} does not seem to be a well-reported fact in the literature; 
   the corollary shows that all Sylow $2$-subgroups of symmetric groups
  (which are cross products of wreath products of $S_2$'s) have rational character table, which was posed
  as an open problem in \cite[Problem 15.25]{Kourovka} in 2002 and 
  solved in \cite{Revin} in 2004 and \cite{Kolesnikov} in 2005.
However, since $A\wr (B\wr C)=(A\wr B)\wr C$ and $A\wr(B \times C)=(A\wr B) \times (A\wr C)$,
any of the groups of Corollary~\ref{C:RatChar} can be constructed only using the cross product
and $\wr S_n$ operations.  
It is well known that the cross product of two groups with rational character tables has a rational character table.
Furthermore, Pfeiffer \cite{GAP} explains how GAP computes the character table of $G \wr S_n$ from
the character table of $G$, and one can check that if $G$ has rational character table then all of the values constructed in the the character table of $G \wr S_n$ are rational, which implies Corollary~\ref{C:RatChar}.

One might hope that all groups with rational character tables have mass formulas by type,
but this is not necessarily the case.   For example, considering $(C_3 \times C_3) \rtimes C_2$ (where $C_2$ acts non-trivially on each factor separately) in the tame case 
in type $(1^3\, 2^1 \, 1^1)$, one can check that for $q\equiv 1 \pmod{3}$ the mass is zero
and for $q\equiv 2 \pmod{3}$ the mass is non-zero.

\end{section}

\section{Towers and direct sums of field extensions}\label{S:towers}
Kedlaya explains the correspondence between Galois permutation representations and \'{e}tale extensions in
\cite[Lemma 3.1]{Kedlaya}.  We have seen this correspondence already in other terms.  If we have a representation
$\rho : G_K \ra \Gamma$ with $r$ orbits, $S_i$ is the stabilizer of an element in the $i$th orbit, and $L_i$ is the fixed field of $S_i$,
then $\rho$ corresponds to $L=\bigoplus_{i=1}^r L_i$. 
For a local field $F$, let $\wp_F$ be the prime of $F$.
 In this correspondence,
if $c$ is the counting function attached to the permutation representation of $\Gamma$, then $c$
is the discriminant exponent of the extension $L/K$ \cite[Lemma 3.4]{Kedlaya}.  In other words, $\wp_K^{c(\rho)}=\Disc(L|K)$.

We can interpret the representations $\rho :G_K \ra A \wr B$ as towers of \'{e}tale extensions $M/L/K$.  
If we take $\bar{\rho}: G_K \ra  B$, then $L=\bigoplus_{i=1}^r L_i$ is just the \'{e}tale extension of $K$ corresponding to 
$\bar{\rho}$. 
 Then if $M$ is the \'{e}tale extension of $K$ corresponding to $\rho$, we see that
$M=\bigoplus_{i=1}^r M_i$, where $M_i$ is the \'{e}tale extension of $L_i$ corresponding to
$\rho_i: G_{L_i} \ra A$.   So we see
that $M$ is an \'{e}tale extension of $L$, though $L$ might not be a field. 

Let $c$ be the counting function of our mass formula
for wreath products (given by Equation~\eqref{E:count}).  From Equation~\eqref{E:count}, we obtain 
\begin{equation*}
 \wp_K^{c(\rho)}=\wp_K^{c_B(\bar{\rho})} \prod_{i=1}^r N_{L_i|K} (\wp_{L_i}^{c_A(\rho_i)}).
\end{equation*}
For example, if $c_A$ and $c_B$ are both given by the discriminant exponent (or equivalently, attached to the permutation representation), then
\begin{equation}\label{E:fromdisc}
 \wp_K^{c(\rho)}=\Disc (L|K) \prod_{i=1}^r N_{L_i|K} (\Disc (M_i|L_i)).
\end{equation}
For comparison, $\Disc (M|K)=\Disc (L|K)^{[M:L]} \prod_{i=1}^r N_{L_i|K} (\Disc (M_i|L_i))$.

As we will see for $\Gamma=D_4$ in the next section, representations $\rho: G_K \ra \Gamma$ can give not only field extensions of $K$ whose Galois closure
has Galois group $\Gamma$, but also field extensions whose Galois closure
has Galois group a proper subgroup of $\Gamma$, as well as direct sums of field extensions.
One could say
that representations $\rho: G_K \ra A \wr B$ correspond to towers of ``$A$-extensions'' over ``$B$-extensions'' and further relate iterated wreath products to iterated towers.  
Similarly, one could say
that a representation $\rho: G_K \ra A \times B$ corresponds to a direct sum of an ``$A$-extension'' and a ``$B$-extension.''  
The quotes
indicate that the extensions do not necessarily have Galois closure with group $A$ or $B$.  In fact, it
seems the most convenient way to define ``$A$-extensions'' or isomorphisms of ``$A$-extensions'' is simply to use the language
of Galois representations as we have in this paper. 

\section{Masses for $D_4$}\label{S:further}
By Proposition~\ref{P:RatChar} we know, at least for proper counting functions,
that   the existence of a mass formula for a group $\Gamma$ for fields with $q_K$ relatively prime to $|\Gamma|$
 does not depend on the choice of the counting function.  However,
in wild characteristic this is not the case.  For example, $D_4$, the dihedral group with 8 elements,
is isomorphic to $S_2 \wr S_2$, so by Theorem~\ref{T:allMF} there is a $c$ (given in Equation~\eqref{E:count}) for which $D_4$ has a mass formula for all
local fields.  An expression for $c$ in terms of \'{e}tale extensions can be read off from Equation~\eqref{E:fromdisc}. In particular, 
for a 
surjective representation $\rho: G_K \ra D_4$ corresponding to a quartic field extension $M$ of $K$ with a quadratic subextension $L$,
\begin{equation}\label{E:Ex1}
 \wp_K^{c(\rho)}=\Disc (L|K) N_{L|K} (\Disc (M|L)).
\end{equation}
For this $c$, for all local fields $K$, we have that
$$M(K,D_4,c):=\sum_{\rho\in S_{K,D_4}} \frac{1}{q_K^{c(\rho)}}=8+\frac{16}{q_K}+\frac{16}{q_K^2}.$$
From
the definition of $c$ given in Equation~\eqref{E:cdef} and the description of the absolute tame Galois group of a local field,
we can compute $M(K,D_4,c)$ for a field $K$ with $q_K$ odd. 
By Theorem~\ref{T:MFR} we know the formula holds for all $K$.

However, the counting function for $D_4$ that has been considered when counting global extensions 
(for example in \cite{D4Q})
is the one attached the the faithful permutation representation of $D_4$ on a four element set (equivalently the discriminant exponent of the corresponding \'{e}tale extension). 
We call this counting function $d$, and in comparison with Equation~\eqref{E:Ex1} we have
\begin{equation*}
 \wp_K^{d(\rho)}=\Disc(M|K)=\Disc (L|K)^2 N_{L|K} (\Disc (M|L)).
\end{equation*}
With $d$, we now show that $D_4$ does
not have a mass formula for all local fields.

Using the correspondence of Section~\ref{S:towers}, we can analyze the representations $\rho: G_K \ra D_4\subset S_4$ in the following table,
where $I=\im(\rho)$ and $j=|\{s\in S_4|\, sIs^{-1}\subset D_4\}|$ and $k=|\operatorname{Centralizer}_{S_4}(I)|$.
 We take the
$D_4$ in $S_4$ generated by $(1\,2\,3\,4)$ and $(1\,3)$.

\begin{tabular}{l|l|l|l}
$I$ & $j$ & $k$ & $L$\\[2pt]
\hline
\hline
& & &\\[-8pt]
$D_4$ & 8 & 2 & degree 4 field whose Galois closure $/K$ has group $D_4$\\[4pt]
\hline
& & &\\[-8pt]
$C_4$ & 8 & 4 & degree 4 field Galois $/K$ with group $C_4\cong\Z/4$\\[4pt]
\hline
& & &\\[-8pt]
$\langle(1\, 2)(3\,4),(1\,3)(2\,4)\rangle$ & 24 & 4 & degree 4 field Galois $/K$ with group $V_4\cong\Z/2\times \Z/2$\\[4pt]
\hline
& & &\\[-8pt]
$\langle(1\, 3),(2\,4)\rangle$ & 8 & 4 & $L_1\oplus L_2$ with $[L_i:K]$=2 and $L_i$ distinct fields\\[4pt]
\hline
& & &\\[-8pt]
\parbox{1.7in}{$\langle(1\,3)(2\,4)\rangle$, $\langle(1\,2)(3\,4)\rangle$, or $\langle(1\,4)(2\,3)\rangle$ }& 24 & 8 & $L_1\oplus L_2$ with $[L_i:K]$=2, and $L_1\cong L_2$ fields \\[12pt]
\hline
& & &\\[-8pt]
$\langle(2\,4)\rangle$ or $\langle(1\,3)\rangle$ & 8 & 4 & $L_1\oplus K\oplus K$ with $[L_1:K]$=2, and $L_1$ a field\\[4pt]
\hline
& & &\\[-8pt]
$1$  & 24 & 24 & $K\oplus K\oplus K\oplus K$ \\[4pt]
\end{tabular}

Each isomorphism class of algebras appears $\frac{j}{k}$ times from a representation $\rho:G_{K}\ra D_4$ (see \cite[Lemma 3.1]{Kedlaya}).
Let $S(K,G,m)$ be the set of isomorphism classes of degree $m$ field extensions of $K$ whose Galois closure over $K$ 
has group $G$.  Then from the above table we see that
\begin{eqnarray*}
M(K,D_4,d)&=&\sum_{F\in S(K,D_4,4)} \frac{4}{|\Disc F|} +\sum_{F\in S(K,C_4,4)} \frac{2}{|\Disc F|}+\sum_{F\in S(K,V_4,4)} \frac{6}{|\Disc F|}\\
&&+\sum_{\substack{F_1,F_2\in S(K,C_2,2)\\F_1\not\cong F_2}} \frac{2}{|\Disc F_1||\Disc F_2|}
+\sum_{F\in S(K,C_2,2)} \frac{3}{|\Disc F|^2}\\
&&+\sum_{F\in S(K,C_2,2)} \frac{2}{|\Disc F|}+1.
\end{eqnarray*}
where if $\wp_F$ is the prime of $F$ and $\Disc F=\wp_F^m$, then $|\Disc F|=q_F^m$.
Using the Database of Local Fields \cite{DataLF} we can compute that $M(\Q_2,D_4,d)=\frac{121}{8}$.  For
fields with $2\nmid q_K$, the structure of the tame quotient of the absolute Galois group of a local field allows us to compute the mass
 to be $8+ \frac{8}{q_K}+ \frac{16}{q_K^2}+ \frac{8}{q_K^3}$ (also see \cite[Corollary 5.4]{Kedlaya}) which evaluates
to 17 for $q_K=2$.  Thus $(D_4,d)$ does not have a mass formula for all local fields.

As another example, Kedlaya \cite[Proposition 9.3]{Kedlaya} found that $W(G_2)$ does not have a mass formula for all local fields of residual characteristic 
$2$ when $c$ is the Artin conductor of the Weyl representation.  However, $W(G_2)\cong S_2 \times S_3$ and thus it has a mass formula
for all local fields with counting function the sum of the Artin conductors of the standard representations of $S_2$ and $S_3$.  

It would be interesting to study what the presence or absence of mass formulas tells us about a counting function, in particular with respect
to how global fields can be counted asymptotically with that counting function.
As in Bhargava's work \cite[Section 8.2]{Bhargava}, we can form an Euler series
\begin{equation*}
 M_c(\Gamma,s)=C(\Gamma)\left(\sum_{\rho\in S_{\R,\Gamma}}\frac{1}{|\Gamma|} \right) \prod_p \left(\frac{1}{|\Gamma|}\sum_{\rho\in S_{\Q_p,\Gamma}} 
\frac{1}{p^{c(\rho)s}}\right)=\sum_{n\geq 1} m_n n^{-s},
\end{equation*}
where $C(\Gamma)$ is some simple, yet to be explained, rational constant.
(We work over $\Q$ for simplicity, and the product is over rational primes.)
For a representation $\rho: G_\Q \ra \Gamma$, let $\rho_p$ be the restriction of
$\rho$ to $G_{\Q_p}$.  The idea is that
$m_n$ should be a heuristic
of the number of $\Gamma$-extensions of $\Q$ 
(i.e. surjective $\rho : G_\Q \ra \Gamma$)
with $\prod_p p^{c(\rho_p)}=n$, though
$m_n$ is not necessarily an integer.

Bhargava \cite[Section 8.2]{Bhargava} asks the following.
\begin{question}\label{Q}
Does
$$
\lim_{X\ra \infty} \frac{\sum_{n=1}^X m_n}{|\{\textrm{isom. classes of surjective }\rho : G_\Q \ra \Gamma
\textrm{ with } \prod_p p^{c(\rho_p)}\leq X \}| } =1?
$$
\end{question}
Bhargava in fact asks more refined questions in which some local behaviors are fixed.
With the counting function $d$ for $D_4$  attached to the permutation representation (i.e. the discriminant exponent),
we can form $M_d(D_4,s)$ and compute numerically the above limit.
We use the work of Cohen, Diaz y Diaz, and Oliver on counting $D_4$-extensions by discriminant 
(see \cite{D4} for a recent value of the relevant constants) to calculate the limit of the denominator, and we use standard Tauberian
theorems (see \cite[Corollary, p. 121]{N}) and 
PARI/GP \cite{PARI} to calculate the limit of the numerator.
  Of course,
$C(D_4)$ has not been decided, but it does not appear (by using the {\tt algdep} function in PARI/GP) that any simple rational
$C(D_4)$ will give an affirmative answer to the above question

In light of our mass formula for a different counting function $c$ for $D_4$, we naturally wonder about Question~\ref{Q}
in the case of $D_4$ and that $c$.  Answering this question would require counting $D_4$ extensions $M$ with quadratic subfield $L$
by
$\Disc (L|\Q) N_{L|\Q} (\Disc (M|L))$ instead of by discriminant
(which is $\Disc (L|\Q)^2 N_{L|\Q} (\Disc (M|L))$).

\section*{Acknowledgements}
I would like to thank Manjul Bhargava for his guidance while I was doing this research and thorough and helpful comments on earlier versions of this manuscript.  I would also like to thank Kiran Kedlaya for providing me an early version of \cite{Kedlaya} and for answers to my questions about that paper.  The referee gave suggestions for additions and improvements to the paper that 
were incorporated and much appreciated.


\begin{thebibliography}{99}

\bibitem{Bh1} Bhargava, M., The density of discriminants of quartic rings and
fields, {\it Ann.\ of Math.} {\bf 162} (2005), 1031--1063.

\bibitem{Bh2} Bhargava, M., The density of discriminants of quintic rings and
fields, {\it Ann.\ of Math.}, to appear.

\bibitem{Bhargava} Bhargava, M., Mass formulae for extensions of local fields, and conjectures on the density of number field discriminants, \textit{Int. Math. Res. Not.}, to appear.

\bibitem{D4}
Cohen, H., F. Diaz y Diaz, and M. Olivier, Counting discriminants of number fields.  {\it J. Théor. Nombres Bordeaux} {\bf 18}  (2006),  no. 3, 573--593.

\bibitem{D4Q}
Cohen, H., F. Diaz y Diaz, and M. Olivier, Enumerating quartic dihedral extensions of $\Q$.  {\it Compositio Math.}  {\bf 133}  (2002),  no. 1, 65--93.

\bibitem{Conway} Conway, J.\ H., personal communication.

\bibitem{DH} Davenport, H. and H.\ Heilbronn, On the density of discriminants of
cubic fields II, {\it Proc.\ Roy.\ Soc.\ London Ser.\ A} {\bf 322} (1971),
no.\ 1551, 405--420.

\bibitem{Ellenberg}  Ellenberg, J. and A. Venkatesh,  Counting extensions of function fields with specified Galois group and bounded discriminant, \textit{Geometric Methods in Algebra and Number Theory}, F. Bogomolov and Y. Tschinkel, eds. (2005), 151--168.

\bibitem{Kedlaya} Kedlaya, K.\ S., 
Mass formulas for local Galois representations (with an appendix by Daniel Gulotta), \textit{Int. Math. Res. Not.}, to appear. 

\bibitem{Kourovka} Khukhro, E.\ I. and V.\ D. Mazurov, Eds., \textit{The Kourovka Notebook.  Unsolved Problems in group theory.  Fifteenth
augmented edition.} Novosibirsk: Russian Academy of Sciences, Siberian Branch, Institute of Mathematics (1999).

\bibitem{Kolesnikov} Kolesnikov, S.\ G., On the rationality and strong reality of Sylow 2-subgroups of Weyl and alternating groups, \textit{Algebra Logika}  {\bf 44}  (2005),  no. 1, 44--53, 127;  English translation in  \textit{Algebra Logic}  {\bf 44}  (2005),  no. 1, 25--30.

\bibitem{DataLF}
Jones, J.\ W.  and D.\ P. Roberts,
A database of local fields,
{\it J.\ Symbolic Comput.} {\bf 41} (2006), no.\ 1, 80--97.
{\tt \verb+http://math.asu.edu/~jj/localfields/+}


\bibitem{N}
Narkiewicz, W., translated by S. Kanemitsu,
\textit{Number Theory,} 
 World Scientific,  1983.

\bibitem{PARI}
PARI/GP, version {\tt 2.3.2}, Bordeaux, 2006, {\tt http://pari.math.u-bordeaux.fr/}.

\bibitem{GAP}
Pfeiffer, G., 
Character tables of Weyl groups in GAP,
\textit{Bayreuth. Math. Schr.} {\bf 47} (1994), 165--222.

\bibitem{Revin}  Revin, D.\ O., The characters of groups of type $X\wr \mathbb Z\sb p$, \textit{ Sib. \~{A}lektron. Mat. Izv.}  {\bf 1}  (2004), 110--116.

\bibitem{Serre} Serre, J.-P., Une ``formule de masse'' pour les extensions totalement ramifi\'{e}es de degr\'{e} donn\'{e}
d'un corps local, \textit{C. R. Acad. Sci. Paris.} S\'{e}r. A-B {\bf 286} (1978), no. 22, A1031--A1036.


\bibitem{Abelian} Wood, M.\ M., On the probabilities of local behaviors in abelian field extensions, in preparation.

\end{thebibliography}
\end{document}